\newtheorem{theorem}{Theorem}[section]
\newtheorem{conjecture}[theorem]{Conjecture}
\newtheorem{claim}{}[theorem]
\title{Borodin-Kostochka Conjecture holds for odd-hole-free graphs}
\author{Rong Chen, Kaiyang Lan, Xinheng Lin, Yidong Zhou\\
\\
Center for Discrete Mathematics,\ \ Fuzhou University\\
Fuzhou,\ \ P. R. China}
\begin{document}

\maketitle

\footnote{Mathematics Subject Classification: 05C15, 05C17, 05C69.

Emails: crlwchg@163.com (R. Chen),\ \ kylan95@126.com (K. Lan),\ \ 602503578@qq.com (X. Lin),\ \ zoed98@126.com (Y. Zhou).
}

\begin{abstract}
The Borodin-Kostochka Conjecture states that for a graph $G$, if $\Delta(G)\geq 9$, then $\chi(G)\leq\max\{\Delta(G)-1,\omega(G)\}$.
In this paper, we prove the Borodin-Kostochka Conjecture holding for odd-hole-free graphs.

{\em\bf Key Words}: chromatic number; odd holes.
\end{abstract}

\section{Introduction}

All graphs in this paper are finite and simple.
For two graphs $G$ and $H$, we say that $G$ {\em contains} $H$ if $H$ is isomorphic to an induced subgraph of $G$.
When $G$ does not contain $H$, we say that $G$ is {\em $H$-free}.
For a family $\mathcal{H}$ of graphs, we say that $G$ is $\mathcal{H}$-{\em free} if $G$ is $H$-free for every graph $H\in \mathcal{H}$.

For a graph $G$, we use $\chi(G)$, $\omega(G)$ and $\Delta(G)$ to denote the chromatic number, clique number and maximum degree of $G$, respectively.
Evidently, $\omega(G)\leq \chi(G)\leq \Delta(G)+1$.
In 1941, Brooks observed that odd cycles and complete graphs are the only graphs to achieve the upper bound and strengthened this bound by proving the following result.

\vskip.1cm

\begin{theorem}[Brooks' Theorem \cite{Brook41}]\label{Brooks}
Let $G$ be a graph with $\Delta(G)\geq 3$. Then $$\chi(G)\leq\max\{\Delta(G),\omega(G)\}.$$
\end{theorem}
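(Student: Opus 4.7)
The plan is to prove the contrapositive shape of Brooks' theorem: if $G$ is connected, $\Delta(G)\ge 3$, and $G$ is not a complete graph, then $\chi(G)\le \Delta(G)$. The overall strategy is a greedy-coloring argument with a cleverly chosen vertex ordering, in the style of Lovász. Writing $\Delta=\Delta(G)$, the goal is to order the vertices as $v_1,\dots,v_n$ so that each $v_i$ (for $i<n$) has at most $\Delta-1$ neighbors among $v_1,\dots,v_{i-1}$, and to treat $v_n$ separately. Greedy coloring in this order then uses at most $\Delta$ colors.

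First I would dispose of the non-regular case. If some vertex $v$ has $\deg(v)<\Delta$, I would take a BFS tree $T$ rooted at $v$ and order the vertices in reverse BFS order, so that $v=v_n$ and every other vertex has at least one neighbor (its parent in $T$) later in the order. Then each $v_i$ with $i<n$ has at most $\Delta-1$ earlier neighbors, and $v_n=v$ itself has degree at most $\Delta-1$, so the greedy procedure succeeds.

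The substantive part is the $\Delta$-regular case. Here the plan is to find two non-adjacent vertices $u,w$ at distance exactly two from each other, sharing a common neighbor $v$, such that the graph $G-\{u,w\}$ is still connected. Once such a triple exists, I assign $u$ and $w$ the same color, then perform a BFS in $G-\{u,w\}$ rooted at $v$, list the remaining vertices in reverse BFS order (so $v$ is last), and extend the partial coloring greedily. Every vertex other than $v$ has a later neighbor in the ordering, so at most $\Delta-1$ earlier neighbors; and $v$ itself has two already-colored neighbors $u,w$ sharing a color, so at most $\Delta-1$ distinct colors appear on its neighborhood. Thus $\Delta$ colors suffice.

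The main obstacle, and the only place real graph structure is used, is proving the existence of such $u,v,w$ in a connected, $\Delta$-regular, non-complete graph with $\Delta\ge 3$. I would split on vertex connectivity. If $G$ is $3$-connected, pick any vertex $v$ and any two non-adjacent neighbors $u,w$ of $v$ (which exist because $G$ is not complete and $\Delta\ge 3$); then $G-\{u,w\}$ remains connected by $3$-connectivity. If $G$ has a cut vertex $v$, I would pick $u,w$ as neighbors of $v$ lying in different components of $G-v$, using $\Delta\ge 3$ and regularity to argue they can be chosen non-adjacent, and using the block decomposition to preserve connectivity. The trickiest subcase is when $G$ is $2$-connected but has a $2$-vertex cut; here I would use a careful endblock/separator analysis to select $u$ and $w$ in a suitable ``leaf'' block, with $v$ lying on the separator, so that removing $u,w$ cannot disconnect $G$. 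This structural dichotomy, already delicate enough to be the historically interesting step, is where the argument really spends its effort; everything else is orderly bookkeeping.
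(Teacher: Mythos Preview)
The paper does not prove this statement: Theorem~\ref{Brooks} is quoted from \cite{Brook41} as a classical result and then used as a black box in the proof of Theorem~\ref{Main2}. There is no ``paper's own proof'' to compare your attempt against.

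Your sketch is the standard Lov\'asz ordering proof and is correct in outline, but two of the low-connectivity subcases are looser than they should be. In the cut-vertex case, choosing $u,w$ in different components of $G-v$ does not by itself keep $G-\{u,w\}$ connected: if $v$ has a unique neighbour $u$ in some component $C$, then $C\setminus\{u\}$ is stranded. The clean fix is to drop the $u,v,w$ trick here and instead reduce to $2$-connected graphs via the block tree: every cut vertex has degree strictly less than $\Delta$ in each block containing it, so each block falls under your already-handled non-regular case, and the block colourings can be merged by permuting colours at cut vertices. In the $2$-connected, non-$3$-connected case your endblock idea is right but deserves one more line: choose $v$ in a $2$-cut, take two endblocks $B_1,B_2$ of $G-v$ with cut vertices $c_1,c_2$, and pick $u\in (B_1\setminus\{c_1\})\cap N(v)$ and $w\in (B_2\setminus\{c_2\})\cap N(v)$; such $u,w$ exist because $G$ is $2$-connected, they are non-adjacent since they lie in different blocks and are not cut vertices, and removing them leaves each $B_i$ (hence $G-v$, hence $G$) connected. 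With those two patches, your argument goes through.
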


In 1977, Borodin and Kostochka \cite{Borodin77} conjectured that a similar result holds for $\Delta(G)-1$ colorings.

\begin{conjecture}[Borodin-Kostochka Conjecture]\label{B-K Conj}
Let $G$ be a graph with $\Delta(G)\geq 9$. Then $$\chi(G)\leq\max\{\Delta(G)-1,\omega(G)\}.$$
\end{conjecture}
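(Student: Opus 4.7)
The plan is a vertex-minimum counterexample argument of Borodin–Kostochka / Gallai type. Suppose $G$ is a smallest counterexample, so $\chi(G)>\max\{\Delta(G)-1,\omega(G)\}$ and $\Delta(G)\geq 9$. Brooks' Theorem \ref{Brooks} gives $\chi(G)\leq \Delta(G)$, so in fact $\chi(G)=\Delta(G)$ and $\omega(G)\leq \Delta(G)-1$; vertex-minimality forces $G$ to be $\chi$-critical, and the Gallai–Dirac minimum-degree bound for critical graphs then gives $\delta(G)\geq \chi(G)-1=\Delta(G)-1$. Thus, writing $k:=\Delta(G)$, every vertex of $G$ has degree $k$ or $k-1$, and $\omega(G)\leq k-1$. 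Let $H$ denote the subgraph of $G$ induced by the vertices of degree exactly $k$; the edges joining $H$ to its complement in $G$ carry ``spare'' degree that one can exploit in any proposed recoloring.

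I would then aim for two structural statements and combine them. First, every component of $H$ is either of bounded order (in terms of $k$ only) or else contains a dense substructure, specifically a $K_{k-1}$ or an appropriate complete multipartite block on $k-1$ vertices. Second, no such dense substructure can actually occur inside $H$, because its presence would either force $\omega(G)\geq k$ (contradicting the criticality setup) or admit a partial $(k-1)$-coloring that extends to all of $G$ by Kempe-chain swaps. Combining the two statements bounds every component of $H$, at which point a discharging argument that assigns each vertex initial charge $d(v)-(k-1)$ and redistributes across $H$–to-complement edges forces total charge to be negative, contradicting $\chi(G)=k$. The threshold $k\geq 9$ is what makes the second statement feasible: one needs at least nine colors in order to run the simultaneous Kempe-chain exchanges around a near-$K_k$ (as in the Cranston–Rabern analysis of claw-free graphs) without two swaps interfering.

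For the first statement the plan is a Mozhan / Kierstead–Kostochka partition argument: for any $v\in V(H)$, by minimality there is a proper $(k-1)$-coloring $c$ of $G\setminus v$; each of the $k-1$ colors must appear in $N(v)$, and for each pair $i\neq j$ of colors one studies the bipartite subgraph $G_{ij}$ induced by color classes $i$ and $j$. Either an odd-length Kempe chain between the color-$i$ and color-$j$ neighbors of $v$ can be swapped to free a color at $v$ (extending $c$ to $G$, a contradiction), or all such chains are rigid, and iterating this analysis forces more and more vertices of $N(v)\cup N(N(v))$ to have degree $k$, eventually exhibiting a $K_{k-1}$ or a dense complete-multipartite block inside $H$.

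The main obstacle will be the second structural statement: ruling out dense near-clique substructures in $H$ is essentially the heart of the conjecture and the step that has blocked a general proof since 1977. A successful attack would need either a refined recoloring scheme capable of handling $K_{k-1}$-plus-a-matching configurations at the boundary of $H$, or the discovery of a new reducible configuration yielding a smaller counterexample; verifying the resulting discharging inequalities and showing that no bad configuration slips through is where the bulk of the technical work must go, and it is here that the bound $\Delta(G)\geq 9$ is repeatedly used to provide the slack required for the simultaneous Kempe swaps. Executing this program in full generality is what would complete a proof of the conjecture as stated.
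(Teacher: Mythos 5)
This statement is a conjecture, not a theorem of the paper: the paper does not prove it in general, and only establishes it (in Theorems \ref{Main1} and \ref{Main2}) under the additional hypothesis that $G$ is odd-hole-free. Your proposal attempts the full conjecture, and by your own account it does not close: the ``second structural statement'' --- ruling out dense near-clique substructures in the degree-$\Delta$ subgraph $H$ --- is left as the step that ``has blocked a general proof since 1977.'' That is a genuine and fatal gap, not a technicality. Everything before it (minimal counterexample, Brooks' Theorem forcing $\chi(G)=\Delta(G)$ and $\omega(G)\leq\Delta(G)-1$, criticality, the Gallai--Dirac degree bound, Kempe-chain analysis of the bipartite graphs $G_{ij}$) is standard and correct, and indeed mirrors the known partial results of Reed and of Cranston--Rabern, but none of it produces a contradiction without the missing reducibility argument. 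As written, the proposal is a research program, not a proof.

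It is worth contrasting this with what the paper actually does. The paper sidesteps the hard configuration analysis entirely by restricting to odd-hole-free graphs: in Claim \ref{complete}, the Kempe chain from $u_i$ to the color class $V_j$ is an induced path of odd length, and if it had length at least $3$ it would close up with $u$ into an odd hole. Forbidding odd holes therefore forces $[V_i,V_j]\neq\emptyset$ for all pairs, which makes $N[u]$ nearly a clique and reduces the problem to a short recoloring argument. In your general setting no such forcing is available --- the Kempe chains can be long and the neighborhood of $u$ can be sparse --- and that is exactly where your plan stalls. If you want a provable result along your lines, you must either impose a structural hypothesis (as the paper does) that collapses the Kempe chains, or supply the reducible-configuration and discharging analysis you sketch in full detail, which no one has yet managed for $\Delta(G)\geq 9$.
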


Cranston, Lafayette and Rabern \cite{UG21} proved that Conjecture \ref{B-K Conj} fails under either of the weaker assumptions $\Delta(G)\geq 8$ or $\omega(G)\leq \Delta(G)-2$.
In 1999, Reed \cite{BR99} proved that Conjecture \ref{B-K Conj} holds for graphs having maximum degrees at least $10^{14}$.
Recently, the Borodin-Kostochka Conjecture was proved true for claw-free graphs in \cite{DC13}, for $\{P_5,C_4\}$-free graphs in \cite{UG21}, for $\{P_5$, gem$\}$-free graphs in \cite{DC22}, and for hammer-free graphs in \cite{CLL}.

A $cycle$ is a connected 2-regular graph.
Let $P_n$ and $C_n$ denote the path and cycle on $n$ vertices, respectively.
The {\em length} of a path or a cycle is the number of its edges.
A {\em hole} in a graph is an induced cycle of length at least four. We say a hole $C$ is {\em odd } if $|V(C)|$ is odd.
For any odd-hole-free graph $G$, we have $\chi(G)\leq 2^{2^{\omega(G)+2}}$ by the main result proved by Scott and Seymour in \cite{AS16}, while Ho\'ang \cite{AS} conjectured that $\chi(G)\leq \omega(G)^2$.
In this paper, we prove that the Borodin-Kostochka Conjecture holds for odd-hole-free graphs.

\vskip.1cm

\begin{theorem}\label{Main1}
Let $G$ be an odd-hole-free graph with $\Delta(G)\geq 9$.
Then $$\chi(G)\leq\max\{\Delta(G)-1,\omega(G)\}.$$
\end{theorem}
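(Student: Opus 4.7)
The plan is to argue by contradiction from a vertex-minimum counterexample $G$: odd-hole-free, $\Delta(G)\ge 9$, yet $\chi(G)>\max\{\Delta(G)-1,\omega(G)\}$. Minimality gives vertex-criticality. By Brooks' Theorem \ref{Brooks} one has $\chi(G)\le\Delta(G)$, and $G$ cannot be $K_{\Delta(G)+1}$ since that would force $\chi(G)=\omega(G)$. Hence $\chi(G)=\Delta(G)=:k$ and $\omega(G)\le k-1$.

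The first substantive step is to locate a large clique inside $G$. Following the template used for the previously settled cases (claw-free \cite{DC13}, $\{P_5,C_4\}$-free \cite{UG21}, hammer-free \cite{CLL}), I would establish that a vertex-critical graph with $\chi=\Delta\ge 9$ and $\omega\le\Delta-1$ must contain $K_{k-1}$ as an induced subgraph, and furthermore that each vertex of this clique has only one neighbor outside. This part of the argument is essentially graph-class-independent: it comes out of the standard Mozhan-type partition analysis that underlies every existing proof of the Borodin-Kostochka conjecture in a restricted class, and can be quoted from or adapted out of that literature.

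The odd-hole-free hypothesis then enters. Fix an induced $K_{k-1}$, call it $K$, and let $U$ be the set of vertices outside $K$ with at least one neighbor in $K$. For any $u_1,u_2\in U$ with a common $K$-neighbor $a$ and private $K$-neighbors $b_i\in N(u_i)\cap V(K)$, either $u_1u_2\in E(G)$ or $\{u_1,b_1,b_2,u_2\}$ induces a $P_4$. Extending such induced $P_4$'s along shortest induced paths between vertices of $U$ and invoking the odd-hole-free condition should show that the family $\{N(u)\cap V(K):u\in U\}$ is laminar and that $G[U]$ has very restricted structure (each component essentially clique-like). With this control over $U$, a direct recoloring argument should produce a $(k-1)$-coloring of $G$: color $K$ with $k-1$ colors, then extend to $U$ using the laminar neighborhood structure together with a Hall-type system-of-distinct-representatives argument.

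The main obstacle, I expect, lies in this middle step: translating odd-hole-freeness into usable laminar structure on the $K$-neighborhoods of $U$. Parity of induced path lengths is a \emph{global} condition, so local arguments inside $N(K)$ will not suffice; one will need to rule out odd closures formed by induced paths running through several $U$-vertices and back into $K$, likely via a careful case split combined with a secondary extremal choice of $K$ (or of the reference $K_{k-1}$) maximizing a suitable numerical invariant. Once the laminar dichotomy is in hand, assembling the final $(k-1)$-coloring should be routine.
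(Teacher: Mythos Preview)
Your setup (vertex-minimum counterexample, Brooks to get $\chi(G)=\Delta(G)=:k$ and $\omega(G)\le k-1$, vertex-criticality) matches the paper. The divergence, and the gap, is at the very next step.

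You assert that the existence of an induced $K_{k-1}$ in a vertex-critical graph with $\chi=\Delta\ge 9$ and $\omega\le\Delta-1$ is ``essentially graph-class-independent'' and can be ``quoted from or adapted out of'' the Mozhan/Cranston--Rabern literature. It cannot. No such general clique lemma is known; the best class-free lower bounds on $\omega$ in a minimum BK counterexample are well below $\Delta-1$. In each of the papers you cite, the restricted-class hypothesis is used (in different ways for different classes) to obtain the large clique or an equivalent dense local configuration. So your ``first substantive step'' is in fact the heart of the problem, and you have not spent the odd-hole-free hypothesis on it.

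The paper does the opposite of what you propose: it uses odd-hole-freeness precisely to produce the $(\Delta-1)$-clique, and after that everything is elementary recolouring. Delete a maximum-degree vertex $u$, take a $(\Delta-1)$-colouring of $G-u$; all $\Delta-1$ colours appear on $N(u)$, exactly one colour twice. For any two colour classes $V_i,V_j$ on $N(u)$, the Kempe $\{i,j\}$-component of $u_i$ must meet $V_j$ (else a swap frees a colour for $u$); a shortest path in that component from $u_i$ to $V_j$ is induced and of odd length, and together with $u$ it would be an odd hole unless it is a single edge. Hence $[V_i,V_j]\neq\emptyset$ for all $i,j$, and $\{u,u_1,\dots,u_{\Delta-2}\}$ is already a $(\Delta-1)$-clique. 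The remaining page is a short local recolouring argument (no laminar families, no Hall/SDR) that pins down which two neighbours share a colour under \emph{every} $(\Delta-1)$-colouring of $G-u$ and reaches a contradiction using only $\Delta\ge 7$.

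In short, the place where you expected the difficulty (extracting ``laminar structure'' around a pre-given $K_{\Delta-1}$) is not where the paper does any work, and the place you treated as a black box (finding $K_{\Delta-1}$) is exactly where the odd-hole-free hypothesis is decisive.
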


In fact, to prove Theorem \ref{Main1}, we prove a slightly stronger result.

\begin{theorem}\label{Main2}
Let $G$ be an odd-hole-free graph with $\Delta(G)\geq 7$.
Then $$\chi(G)\leq\max\{\Delta(G)-1,\omega(G)\}.$$
\end{theorem}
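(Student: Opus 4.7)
I would argue by contradiction, taking an odd-hole-free graph $G$ with $\Delta(G)\geq 7$, $\chi(G) > \max\{\Delta(G)-1, \omega(G)\}$, and $|V(G)|$ minimum. By Theorem~\ref{Brooks}, $\chi(G) = \Delta(G)$, so $\omega(G) \leq \Delta(G)-1$. Minimality forces $G$ to be $\chi$-vertex-critical, hence every vertex has degree at least $\Delta(G)-1$; in particular the degrees of $G$ lie in $\{\Delta(G)-1, \Delta(G)\}$.

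The first step is to extract local constraints from the odd-hole-free hypothesis. Fix $v\in V(G)$ and set $H := G[N(v)]$. Any induced path $a = x_0 x_1 \cdots x_k = b$ in $G \setminus v$ whose internal vertices are non-neighbors of $v$, and whose endpoints $a,b \in N(v)$ are nonadjacent in $H$, closes through $v$ into an induced $(k+2)$-cycle, so $k$ must be even. In particular, two nonadjacent vertices of $H$ cannot be joined by an induced $P_4$ whose internal vertices lie outside $N(v)$, ruling out many ``short bridges'' across non-edges of $H$.

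The second step is to combine this parity observation with criticality at a vertex $v$ of maximum degree. I would apply standard lemmas on $\Delta$-chromatic-critical graphs (in the spirit of Kostochka and of Cranston--Rabern) to show that $H$ must be dense: either $H$ contains a clique of size close to $\Delta(G)-1$, or $H$ has a ``near-clique plus exceptional vertices'' shape. In either case the exceptional vertices carry non-edges whose local linkage is constrained by the parity observation. I would then push toward a dichotomy: either the exceptional vertices attach densely enough to extend the near-clique to a clique of size $\Delta(G)$, contradicting $\omega(G) \leq \Delta(G)-1$; or one can trace, through an exceptional vertex and part of the near-clique, an induced odd cycle of length at least $5$, contradicting odd-hole-freeness.

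\textbf{Main obstacle.} The hardest part is controlling the neighborhood $H$ once a near-clique is identified. Since odd-hole-free is far weaker than perfect, no global decomposition theorem is available, so the argument must track induced $P_4$'s and $P_5$'s in $H$ and its extensions by hand. The threshold $\Delta(G)\geq 7$ is probably tight for this strategy, as a smaller $\Delta$ leaves too few vertices in the near-clique for the parity-chasing to close. I would also expect essential use of the minimality of $G$ through deletion-and-recolor reductions: excise a small induced subgraph, color the (still odd-hole-free) remainder by induction, then reinsert the removed vertices by exploiting the rigid non-edge structure forced in $H$. The interplay between the inductive recoloring and the parity constraint inherited from the odd-hole-free condition is where the bulk of the case analysis will lie.
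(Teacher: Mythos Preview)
Your setup (minimal counterexample, $\Delta$-criticality, degrees in $\{\Delta-1,\Delta\}$) matches the paper, and your parity observation is pointing at the right phenomenon, but you have the direction of use inverted and you miss the mechanism that actually drives the proof. The paper does not invoke any structural lemmas of Kostochka or Cranston--Rabern type to force density in $H=G[N(u)]$. Instead it takes a $(\Delta-1)$-colouring of $G-u$ and runs a Kempe-chain argument: for two colour classes $V_i,V_j\subseteq N(u)$ with $[V_i,V_j]=\emptyset$, the bichromatic component through $u_i$ must reach $V_j$ (else swap colours and extend to $u$), and a shortest induced path in that component is an \emph{odd}-length path of length $\geq 3$ whose internal vertices avoid $N(u)$; together with $u$ this is an odd hole. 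So odd-hole-freeness forces $[V_i,V_j]\neq\emptyset$ for all $i\neq j$, which immediately yields a $(\Delta-1)$-clique $\{u,u_1,\dots,u_{\Delta-2}\}$. Your plan treats the parity constraint as a prohibition on certain short bridges and then hopes separate ``standard lemmas'' will supply the near-clique; the paper instead uses the colouring to \emph{produce} the forbidden odd path, getting the clique for free.

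Once the $(\Delta-1)$-clique is in hand, the paper's endgame is not a structural case analysis but a short recolouring trick: it shows that in every $(\Delta-1)$-colouring of $G-u$ the unique repeated-colour pair in $N(u)$ is the same two vertices $u_{\Delta-1},u_\Delta$, that each $u_i$ sees at most one repeated colour in its own neighbourhood, and then (using $\Delta\geq 7$ only to guarantee that $u_\Delta$ has at least three neighbours among $u_1,\dots,u_{\Delta-2}$) performs a three-vertex colour swap that moves the repeated pair, a contradiction. Your anticipated ``bulk of the case analysis'' and ``tracking induced $P_4$'s and $P_5$'s by hand'' never materialises; the odd-hole hypothesis is used exactly once, in the Kempe-chain step. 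The gap in your plan is therefore not a wrong idea but a missing one: without the Kempe-chain link between colouring and parity, your programme has no concrete way to establish the density of $H$, and the appeal to off-the-shelf critical-graph lemmas is both unnecessary and unlikely to mesh with odd-hole-freeness.
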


\section{Proof~of~Theorem~\ref{Main2}}
For a graph $G$ and a subset $X$ of $V(G)$, let $G-X$ denote the graph obtained from $G$ by deleting all vertices in $X$ and let $G[X]$ be the subgraph of $G$ induced by $X$.
Let $N(X)$ be the set of vertices in $V(G)-X$ that have a neighbour in $X$.
Set $N[X]:=N(X)\cup X$.
For any $x\in V(G)$, set $d_G(x):=|N(x)|$.
When there is no confusion, subscripts are omitted. For a vertex $u\in V(G)-X$, we say that $u$ is {\em complete} to $X$ if $u$ is adjacent to every vertex in $X$.
For an positive integer $k$, 
a graph $G$ is said to be {\em k-vertex-critical} if $\chi(G)=k$ and $\chi(G-v)\leq k-1$ for each vertex $v$ of $G$.


\begin{proof}[$\mathbf{Proof~of~Theorem~\ref{Main2}}$.]
When $\omega\geq \Delta(G)$, the result holds from Theorem \ref{Brooks}.
So we may assume that $\chi(G)<\Delta(G)$.
Assume that Theorem \ref{Main2} is not true.
Let $G$ be a counterexample to Theorem \ref{Main2} with $|V(G)|$ as small as possible.
Then $G$ is connected.

\begin{claim}\label{critical}
$G$ is $\Delta(G)$-vertex-critical.
\end{claim}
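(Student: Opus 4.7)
The plan is to use the minimality of $G$ together with Brooks' Theorem to show that removing any single vertex drops the chromatic number below $\Delta(G)$. By the preamble, we have $\omega(G)\le\Delta(G)-1$, and combining this with Brooks' Theorem (noting $G$ is connected, not a clique since $\omega(G)<\Delta(G)+1$, and not an odd cycle since $\Delta(G)\ge 7$) we get $\chi(G)\le\Delta(G)$; since $G$ is a counterexample, $\chi(G)=\Delta(G)$. Hence it is enough to show $\chi(G-v)\le\Delta(G)-1$ for every $v\in V(G)$.

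Fix an arbitrary vertex $v$. I would first record the obvious monotonicity facts: the class of odd-hole-free graphs is closed under taking induced subgraphs, so $G-v$ is odd-hole-free; also $\omega(G-v)\le\omega(G)\le\Delta(G)-1$ and $\Delta(G-v)\le\Delta(G)$. Since $|V(G-v)|<|V(G)|$, the minimality of $G$ says $G-v$ satisfies the conclusion of Theorem~\ref{Main2} whenever its hypotheses still apply.

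I would split into two cases according to $\Delta(G-v)$. If $\Delta(G-v)\ge 7$, then by the minimality of $G$,
$$\chi(G-v)\le\max\{\Delta(G-v)-1,\ \omega(G-v)\}\le\max\{\Delta(G)-1,\ \Delta(G)-1\}=\Delta(G)-1.$$
If $\Delta(G-v)\le 6$, then Brooks' Theorem, applied componentwise (for the trivial components of maximum degree $\le 2$ the bound $\chi\le 3$ is immediate, and complete-graph components contribute at most $\omega(G-v)\le\Delta(G)-1$), yields
$$\chi(G-v)\le\max\{\Delta(G-v),\ \omega(G-v)\}\le\max\{6,\ \Delta(G)-1\}=\Delta(G)-1,$$
since $\Delta(G)\ge 7$. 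In either case $\chi(G-v)\le\Delta(G)-1<\chi(G)$, so $G$ is $\Delta(G)$-vertex-critical.

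There is no real obstacle here; this is a routine minimum-counterexample reduction. The only point demanding any care is that Brooks' Theorem as stated requires $\Delta\ge 3$, so one must handle the low-degree case of $G-v$ separately (or argue componentwise), and one must use the inequality $\omega(G)\le\Delta(G)-1$ from the opening paragraph to keep both the inductive bound and the Brooks bound below $\Delta(G)$.
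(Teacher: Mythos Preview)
Your argument is correct and follows the same overall outline as the paper: first deduce $\chi(G)=\Delta(G)$ and $\omega(G)\le\Delta(G)-1$, then show $\chi(G-v)\le\Delta(G)-1$ for every $v$. The only difference is in how the second step is justified. The paper applies Brooks' Theorem to $G-u$ and asserts $\max\{\omega(G-u),\Delta(G-u)\}\le\Delta(G)-1$; as written this is not quite right, since $\Delta(G-u)$ may well equal $\Delta(G)$. Your version avoids this by invoking the minimality of $G$ when $\Delta(G-v)\ge 7$ and falling back on Brooks only when $\Delta(G-v)\le 6$, which is exactly the patch needed. So your proof is essentially the intended argument, carried out more carefully.
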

\begin{proof}[Subproof.]
By Theorem \ref{Brooks}, we have $\omega(G)\leq \Delta(G)-1$ and $\chi(G)=\Delta(G)$.
Let $u$ be an arbitrary vertex of $G$. Since $\chi(G-\{u\})\leq max\{\omega(G-\{u\}),\Delta(G-\{u\})\}\leq \Delta(G)-1$ by Theorem \ref{Brooks},
$\chi(G-\{u\})\leq \Delta(G)-1<\chi(G)$, implying that $G$ is $\Delta(G)$-vertex-critical.
\end{proof}

Let $u\in V(G)$ such that $d(u)=\Delta(G)$. Set $N_G(u):=\{u_1,u_2,\ldots,u_{\Delta(G)}\}$ and $G':=G-\{u\}$.
Then $G'$ has a proper $(\Delta(G)-1)$-coloring $\varphi:V(G')\rightarrow \{1,2,\ldots,\Delta(G)-1\}$ by \ref{critical}. 
If, in this coloring of $G'$, one of the $\Delta(G)-1$ colors is not assigned to a neighbour of $u$, we may assign it to $u$, thereby extending the proper $(\Delta(G)-1)$-coloring $\varphi$ of $G'$ to a proper $(\Delta(G)-1)$-coloring of $G$, which is a contradiction.
We may therefore assume that the $\Delta(G)$ neighbours of $u$ receive all $\Delta(G)-1$ colors.
Without loss of generality, let $\varphi(u_i)=i$ for each $1\leq i\leq \Delta(G)-1$ and $\varphi(u_{\Delta(G)})=\Delta(G)-1$.
Set $V_i:=\{u_i\}$ for $1\leq i\leq \Delta(G)-2$ and $V_{\Delta(G)-1}:=\{u_{\Delta(G)-1},u_{\Delta(G)}\}$. 
That is, $V_i$ is the set consisting of the vertices of $N(u)$ which are assigned color $i$. 

\begin{claim}\label{complete}
$[V_i,V_j]\neq \emptyset$ for any $1\leq i<j\leq \Delta(G)-1$, where $[V_i,V_j]$ denotes the set of edges in $G$ that has one end in $V_i$ and other end in $V_j$.
\end{claim}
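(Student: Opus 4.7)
The plan is a Kempe-chain argument that leverages odd-hole-freeness essentially. Assume toward contradiction that $[V_i,V_j]=\emptyset$ for some $1\le i<j\le \Delta(G)-1$. I will either recolor $G'$ so that color $i$ disappears from $N(u)$, allowing $\varphi$ to extend to $u$ and contradicting $\chi(G)=\Delta(G)$ given by \ref{critical}, or I will exhibit an induced odd cycle of length at least $5$ through $u$, contradicting the hypothesis that $G$ is odd-hole-free.

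Let $H_{ij}$ be the subgraph of $G'$ induced by the vertices of $\varphi$-color $i$ or $j$. Because $\varphi$ is proper, $H_{ij}$ is bipartite with parts given by the two color classes, and moreover $H_{ij}$ is an induced subgraph of $G$. Let $K$ be the connected component of $H_{ij}$ containing $u_i$; note that $V_i=\{u_i\}$ since $i\le\Delta(G)-2$. If $K\cap V_j=\emptyset$, then swapping colors $i$ and $j$ throughout $K$ yields a proper coloring of $G'$ in which $u_i$ has color $j$, every vertex of $V_j$ still has color $j$, and no neighbor of $u$ has color $i$; setting $\varphi(u):=i$ extends the coloring to $G$, contradicting \ref{critical}.

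So I may assume $K\cap V_j\neq\emptyset$. Choose $w\in K\cap V_j$ at minimum distance from $u_i$ in $K$, and let $P\colon u_i=v_0,v_1,\dots,v_k=w$ be a shortest $u_i$-$w$ path in $K$. Bipartiteness of $H_{ij}$ forces $k$ to be odd; $[V_i,V_j]=\emptyset$ gives $u_iw\notin E(G)$, so $k\ge 3$; and $P$ is induced in $G$ because it is a shortest path in the induced subgraph $H_{ij}$ of $G$. Then the cycle $C:=u\,u_i\,v_1\cdots v_{k-1}\,w\,u$ has odd length $k+2\ge 5$, and to conclude that $C$ is an odd hole I only need $u$ non-adjacent to each internal vertex $v_t$ with $1\le t\le k-1$: if $\varphi(v_t)=i$ then $v_t\in V_i=\{v_0\}$, impossible; if $\varphi(v_t)=j$ then $v_t\in V_j\cap K$ at distance at most $t<k$ from $u_i$, contradicting the choice of $w$.

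The one subtle point is the case $j=\Delta(G)-1$, where $V_j=\{u_{\Delta(G)-1},u_{\Delta(G)}\}$ has two elements: in principle the shortest $u_i$-$w$ path could pass through the ``other'' vertex of $V_j$, producing a chord $u v_t$ and destroying the induced-cycle conclusion. Selecting $w$ to minimize distance from $u_i$ inside $K$ rules this out uniformly, and otherwise the argument is a textbook Kempe swap combined with the observation that shortest paths in the bipartite induced subgraph $H_{ij}$ remain induced in the ambient graph $G$.
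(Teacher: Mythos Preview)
Your argument is correct and follows essentially the same Kempe-chain approach as the paper: form the two-color induced subgraph, swap if the component of $u_i$ misses $V_j$, and otherwise take a shortest path to produce an odd hole through $u$. If anything you are more careful than the paper, which asserts that $G[V(P)\cup\{u\}]$ is an odd hole without explicitly checking that $u$ has no chord to internal vertices of $P$; your minimum-distance choice of $w$ and the case analysis on $\varphi(v_t)$ fill that gap (and handle the $|V_j|=2$ subtlety) cleanly.
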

\begin{proof}[Subproof.]
Suppose for a contradiction that there exist $V_i,V_j$ such that $[V_i,V_j]=\emptyset$.
Denote by $G_{ij}$ the subgraph of $G'$ induced by all vertices assigned colors $i$ or $j$.
Let $C$ be the component of $G_{ij}$ that contains $u_i$.
Then $V(C)\cap V_j\neq \emptyset$.
If not, by interchanging the colors $i$ and $j$ in $C$, we obtain a new $(\Delta(G)-1)$-coloring of $G'$ in which only $\Delta(G)-2$ colors (all but $i$) are assigned to the neighbours of $u$, which is a contradiction.
Therefore, at least one vertex of $V_j$ is contained in $C$.
Let $P_{ij}$ be a shortest induced path in $C$ linking $u_i$ and a vertex in $V_j$.
Clearly $P$ has odd length.
Since $[V_i,V_j]=\emptyset$, we have that $P$ has length at least 3, so $G[V(P)\cup\{u\}]$ is an odd hole, which is a contradiction.
\end{proof}

\ref{complete} implies that $G[\{u,u_1,u_2,\ldots,u_{\Delta(G)-2}\}$ is a $(\Delta(G)-1)$-clique.

Let $\varphi'$ be another proper $(\Delta(G)-1)$-coloring of $G'$. By the symmetry between $\varphi$ and $\varphi'$, there exist exactly two vertices $x,y\in N_{G}(u)$ with $\varphi'(x)=\varphi'(y)$.
\begin{claim}\label{s3+}
$V_{\Delta(G)-1}=\{x,y\}$. 
\end{claim}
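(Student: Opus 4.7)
The plan is to argue by contradiction, producing a $\Delta(G)$-clique inside $N[u]$, which would contradict the bound $\omega(G)\leq \Delta(G)-1$ already established in Claim~\ref{critical} via Brooks' theorem.

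The first step is to re-run Claim~\ref{complete} verbatim with $\varphi'$ in place of $\varphi$. The proof of that claim only uses that the coloring is a proper $(\Delta(G)-1)$-coloring of $G'$ whose restriction to $N(u)$ uses all $\Delta(G)-1$ colors; both properties hold for $\varphi'$ by exactly the same extension argument used for $\varphi$. Hence any two $\varphi'$-color classes, viewed inside $N(u)$, are joined by an edge. Since $\{u_1,\dots,u_{\Delta(G)-2}\}$ already induces a clique (by Claim~\ref{complete} applied to $\varphi$), these $\Delta(G)-2$ vertices must receive pairwise distinct $\varphi'$-colors. Therefore at most one of $x,y$ can lie in $\{u_1,\dots,u_{\Delta(G)-2}\}$.

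Next, suppose for contradiction that $\{x,y\}\neq V_{\Delta(G)-1}=\{u_{\Delta(G)-1},u_{\Delta(G)}\}$. The previous step then forces exactly one of $x,y$ into $\{u_1,\dots,u_{\Delta(G)-2}\}$, and by the symmetry between $u_{\Delta(G)-1}$ and $u_{\Delta(G)}$ under $\varphi$ I may assume $x=u_i$ with $i\leq \Delta(G)-2$ and $y=u_{\Delta(G)-1}$. Under $\varphi'$ the $\Delta(G)-2$ singleton color classes inside $N(u)$ are then $\{u_j\}$ for $j\in\{1,\dots,\Delta(G)-2\}\setminus\{i\}$ together with $\{u_{\Delta(G)}\}$, and the reapplied Claim~\ref{complete} forces them to form a clique; in particular $u_{\Delta(G)}$ is adjacent to every $u_j$ with $j\in\{1,\dots,\Delta(G)-2\}\setminus\{i\}$.

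To finish, I apply Claim~\ref{complete} one more time to the pair consisting of the doubled class $\{u_i,u_{\Delta(G)-1}\}$ and the singleton $\{u_{\Delta(G)}\}$: there must be an edge between them. But $\varphi(u_{\Delta(G)-1})=\varphi(u_{\Delta(G)})$ implies $u_{\Delta(G)-1}u_{\Delta(G)}\notin E(G)$, so that edge can only be $u_iu_{\Delta(G)}$. Combining with the previous paragraph, $u_{\Delta(G)}$ is adjacent to all of $u_1,\dots,u_{\Delta(G)-2}$, so $\{u,u_1,\dots,u_{\Delta(G)-2},u_{\Delta(G)}\}$ is a $\Delta(G)$-clique, contradicting $\omega(G)\leq\Delta(G)-1$. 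The main thing to get right is the case split on where $\{x,y\}$ can sit relative to $V_{\Delta(G)-1}$; no fresh odd-hole argument is needed beyond reusing Claim~\ref{complete} for $\varphi'$.
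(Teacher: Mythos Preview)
Your proof is correct and follows essentially the same approach as the paper: both apply the argument of Claim~\ref{complete} to the second coloring $\varphi'$ and use the resulting edges, together with $u_{\Delta(G)-1}u_{\Delta(G)}\notin E(G)$, to show that one of $u_{\Delta(G)-1},u_{\Delta(G)}$ is complete to $\{u_1,\dots,u_{\Delta(G)-2}\}$, producing a $\Delta(G)$-clique in $N[u]$. The only cosmetic difference is the labeling: the paper sets $x=u_{\Delta(G)}$ and obtains the clique $N_G[u]\setminus\{u_{\Delta(G)}\}$, whereas you set $y=u_{\Delta(G)-1}$ and obtain the mirror-image clique $N_G[u]\setminus\{u_{\Delta(G)-1}\}$.
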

\begin{proof}[Subproof.]
Assume not.
Since $xy\notin E(G)$, by \ref{complete}, we have $|V_{\Delta(G)-1}\cap \{x,y\}|=1$.
Without loss of generality, we may assume that $x=u_{\Delta(G)}$.
By \ref{complete} and symmetry again, $u_{\Delta(G)-1}$ is complete to $N_G[u]-\{y, u_{\Delta(G)-1},u_{\Delta(G)}\}$.
Since $y$ is not adjacent to $u_{\Delta(G)}$, it follows from \ref{complete} that $yu_{\Delta(G)-1}\in E(G)$.
Hence, $N_G[u]-\{u_{\Delta(G)}\}$ induces a clique of size $\Delta(G)$, which is a contradiction.
\end{proof}

\begin{claim}\label{at most one}
For any $1\leq i\leq \Delta(G)$, there are at most a pair of vertices in $N_{G'}(u_i)$  that can be assigned the same color.
\end{claim}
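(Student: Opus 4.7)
The plan is a proof by contradiction, split into two regimes according to $i$.

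For $i\leq\Delta(G)-2$, the argument is essentially a counting one. By Claim~\ref{complete}, $\{u,u_1,\dots,u_{\Delta(G)-2}\}$ is a $(\Delta(G)-1)$-clique, so $\{u_j:j\neq i,\ j\leq\Delta(G)-2\}\subseteq N_{G'}(u_i)$ contributes $\Delta(G)-3$ pairwise distinct colours, and at least one of $u_{\Delta(G)-1},u_{\Delta(G)}$ also lies in $N_{G'}(u_i)$ bearing colour $\Delta(G)-1$. Since $d_G(u_i)\leq\Delta(G)$ and $u\in N_G(u_i)$, the vertex $u_i$ has at most one ``external'' neighbour in $G'$. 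A short case distinction on whether both, or only one, of $u_{\Delta(G)-1},u_{\Delta(G)}$ belongs to $N_{G'}(u_i)$ then shows that at most one colour appears more than once among $N_{G'}(u_i)$, and that colour appears exactly twice.

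For $i\in\{\Delta(G)-1,\Delta(G)\}$, the clique structure is no longer available on $N(u_i)\cap N(u)$ and the naive count does not close the argument. Here I would invoke a Kempe-chain argument modelled on the proof of Claim~\ref{s3+}. Suppose for contradiction that $N_{G'}(u_i)$ hosts two disjoint monochromatic pairs of distinct colours $c_1\neq c_2\in\{1,\dots,\Delta(G)-2\}$, at least one of them involving an external neighbour. Mirroring the swap used in Claim~\ref{s3+}, I would show that for each $c\in\{c_1,c_2\}$ the $(c,\Delta(G)-1)$-Kempe component of $u_i$ must contain the \emph{other} member of $V_{\Delta(G)-1}$: otherwise swapping that component produces a $(\Delta(G)-1)$-coloring of $G'$ whose unique monochromatic pair in $N_G(u)$ differs from $\{u_{\Delta(G)-1},u_{\Delta(G)}\}$, contradicting Claim~\ref{s3+}. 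A shortest induced alternating path in the relevant Kempe component, joined to $u$ at its two endpoints (both of which are neighbours of $u$), would then yield an induced odd cycle of length at least $5$ in $G$, contradicting odd-hole-freeness.

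The main obstacle is this last step: verifying that the cycle extracted from the Kempe component is \emph{induced} in $G$. The potential chords are those introduced by $u$ (which is adjacent to both endpoints of the path) and by the clique vertices $u_{c_1},u_{c_2}$ (which may lie on or next to the path). Controlling these requires carefully exploiting the alternating two-colour structure of the Kempe chain, together with a shortest-path choice exactly as in the last paragraph of the proof of Claim~\ref{complete}, so that interior vertices of the path lie outside $N_G[u]$ and no chord can appear.
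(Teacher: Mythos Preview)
Your treatment of the case $i\le\Delta(G)-2$ matches the paper's: it is the same count on $N_G[u]$ using Claim~\ref{complete}, yielding at most one external neighbour and hence at most one repeated colour.

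For $i\in\{\Delta(G)-1,\Delta(G)\}$ you are working much harder than necessary, and there is also a genuine gap. The paper's argument here is a two-line pigeonhole plus direct recolouring: if the claim fails for, say, $i=\Delta(G)$, then $N_{G'}(u_{\Delta(G)})$ (which has size at most $\Delta(G)-1$) carries at most $\Delta(G)-3$ distinct colours, so together with colour $\Delta(G)-1$ on $u_{\Delta(G)}$ itself some colour $j\in\{1,\dots,\Delta(G)-2\}$ is absent from $N_{G'}[u_{\Delta(G)}]$. Recolouring $u_{\Delta(G)}$ with $j$ yields a proper $(\Delta(G)-1)$-colouring of $G'$ in which the unique monochromatic pair in $N_G(u)$ is $\{u_j,u_{\Delta(G)}\}\neq\{u_{\Delta(G)-1},u_{\Delta(G)}\}$, contradicting Claim~\ref{s3+}. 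No Kempe chains and no appeal to odd-hole-freeness are needed at this step.

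Your Kempe-chain sketch, besides being more elaborate, does not actually close. Both $u_i$ and ``the other member of $V_{\Delta(G)-1}$'' carry colour $\Delta(G)-1$, so any alternating $(c,\Delta(G)-1)$-path between them has \emph{even} length; adjoining $u$ to its endpoints therefore produces a cycle of even length, not an odd hole. Thus the contradiction you aim for never materialises, quite apart from the induced-ness obstacle you already flag. The simple recolouring of $u_i$ above is the missing idea.
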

\begin{proof}[Subproof.]
When $1\leq i\leq \Delta(G)-2$, since  $\Delta(G)-2\leq d_{G'}(u_i)\leq \Delta(G)-1$ and $u_i$ has at most one non-neighbour in $N_G[u]-\{u_i\}$ by \ref{complete}, $u_i$ has at most one neighbour in $V(G)-N_G[u]$.
Moreover, if $u_i$ is complete to $V_{\Delta(G)-1}$, then $N_G[u_i]=N_G[u]$.
So \ref{at most one} holds when $1\leq i\leq \Delta(G)-2$.
Hence, we may assume that $\Delta(G)-1\leq i\leq \Delta(G)\}$. 
By symmetry it suffices to show that \ref{at most one} holds when $i=\Delta(G)$.
Suppose not. 
Then there must exists a color, say $j$, assigned to no vertex in $N_{G'}[u_{\Delta(G)}]$ as $\Delta(G)-2\leq d_{G'}(u_{\Delta(G)})\leq \Delta(G)-1$.
Hence, we can recolor $u_{\Delta(G)}$ by $j$ to obtain a new proper $(\Delta(G)-1)$-coloring of $G'$, which is a contradiction to \ref{s3+}.
\end{proof}



By\ref{complete} and the Pigeonhole Principle, without loss of generality we may assume that $\{u_1,u_2,u_3\}\subseteq N_G(u_{\Delta(G)})$ as $\Delta(G)\geq 7$.
Moreover, since $G$ has no $\Delta(G)$-clique, there exists some $4\leq i\leq\Delta(G)-2$ such that $u_i u_{\Delta(G)}\notin E(G)$. 
So $u_i$ is complete to $N_G[u]-\{u_{\Delta(G)}\}$ by \ref{complete}. By \ref{at most one} and symmetry we may assume that $u_1$ is the unique vertex in $N_{G'}(u_i)\cup N_{G'}(u_{\Delta(G)})$ assigned color $1$. By \ref{at most one} again, either $\Delta(G)-1$ or $i$, say $i$, is used exactly once in  $N_{G'}(u_1)$. Hence, we can recolor $u_i,u_{\Delta(G)}$ by $1$ and $u_1$ by $i$ to obtain a proper coloring of $G'$, which is a contradiction to \ref{s3+}.
This completes the proof of Theorem \ref{Main2}.
\end{proof}

\section{Acknowledgments}
This research was partially supported by grants from the National Natural Sciences Foundation of China (No. 11971111). 


\end{document}